\newcommand{\R}{{\mathbb R}}
\newcommand{\N}{{\mathbb N}}
\newcommand{\C}{{\mathbb C}}
\newcommand{\A}{{\mathbf A}}
\renewcommand{\geq }{\geqslant}
\renewcommand{\leq }{\leqslant}
\numberwithin{equation}{section}
\newtheorem{Theorem}{Theorem}[section]
\newtheorem{Lemma}[Theorem]{Lemma}
\newtheorem{Proposition}[Theorem]{Proposition}
\theoremstyle{definition} 
\newtheorem{remark}[Theorem]{Remark}
\begin{document}
\title[Frequency-dependent time decay of Schr\"odinger flows]{Frequency-dependent time decay of\\ Schr\"odinger flows}
\author{Luca Fanelli}
\address{Luca Fanelli: SAPIENZA Universit$\grave{\text{a}}$ di Roma, Dipartimento di Matematica, P.le Aldo Moro 5, 00185, Roma, Italy}
\email{fanelli@mat.uniroma1.it}
\author{Veronica Felli}
\address{Veronica Felli: Universit$\grave{\text{a}}$ di Milano Bicocca,
Dipartimento di Scienza dei Materiali, Via Cozzi 55, 20125, Milano, Italy%
}
\email{veronica.felli@unimib.it}
\author{Marco A. Fontelos}
\address{Marco Antonio Fontelos: ICMAT-CSIC, Ciudad Universitaria de
Cantoblanco. 28049, Madrid, Spain}
\email{marco.fontelos@icmat.es}
\author{Ana Primo}

\address{Ana Primo: UAM, Ciudad Universitaria de Cantoblanco. 28049,
Madrid, Spain}
\email{ana.primo@uam.es}
\subjclass[2000]{35J10, 35L05.}
\keywords{Schr\"odinger equation, electromagnetic potentials, decay estimates}

\thanks{
The first author was supported by the project FIRB 2012:
``Dispersive dynamics: Fourier Analysis and Variational Methods'' funded by MIUR.
}

\subjclass[2000]{35J10, 35L05.}
\keywords{Schr\"odinger equation, electromagnetic potentials, representation
formulas, decay estimates}

\begin{abstract}
  We show that the presence of negative eigenvalues in
  the spectrum of the angular component of an electromagnetic
  Schr\"odinger hamiltonian $H$ generically produces a lack of the
  classical time-decay for the associated Schr\"odinger flow
  $e^{-itH}$. This is in contrast with the fact that dispersive
  estimates (Strichartz) still hold, in general, also in this case. We
  also observe an improvement of the decay for higher positive modes,
  showing that the time decay of the solution is due to the first
  nonzero term in the expansion of the initial datum as a series of
  eigenfunctions of a quantum harmonic oscillator with a singular
  potential.  A completely analogous phenomenon is shown for the heat
  semigroup, as expected.
\end{abstract}

\date{\today}
\maketitle

\section{Introduction}\label{sec:intro}
In this manuscript, we follow a research started in \cite{FFFP, f3p-2}
concerning on time decay of $L^p$-norms of solutions to scaling
invariant electromagnetic Schr\"odinger equations. In dimension
$N\geq2$, let us consider the hamiltonian
$$
H=\left(-i\nabla+ \dfrac{{\mathbf{A}}\big(\frac{x}{|x|}\big)} {|x|}
\right)^{\!\!2} + \dfrac{a\big(\frac{x}{|x|}\big)}{|x|^2},
$$
where $\mathbf{A}\in C^1(\mathbb S^{N-1};\R^N)$ is transversal, i.e.
\begin{equation}  \label{transversality}
{\mathbf{A}}(\theta)\cdot\theta=0 \quad \text{for all }\theta\in {\mathbb{S}}%
^{N-1},
\end{equation}
and $a\in
L^{\infty}({\mathbb{S}}^{N-1}; {\mathbb{R}})$. Here and in the sequel, we always denote by $r:=|x|$, $\theta=x/|x|$, so that $x=r\theta$. Associated to $H$, we study the Cauchy-problem for the Schr\"odinger equation
\begin{equation}\label{eq:schro}
\begin{cases}
  \partial_t u=  -iHu
  \\
  u(x,0)=u_0(x)\in L^2(\R^N),
  \end{cases}
\end{equation}
with $u=u(x,t):\R^{N+1}\to\C$.

A fundamental role in the description of the dynamics in \eqref{eq:schro} is played by the angular hamiltonian
\begin{equation}  \label{eq:angular}
L  =\big(-i\,\nabla_{\mathbb{S}^{N-1}}+{\mathbf{A}}\big)%
^2+a(\theta).
\end{equation}
Notice that $L$ is a symmetric operator, with compact inverse. Therefore, no continuous and residual spectrum are present, and
$$
\sigma(L) = \sigma_{\textrm{p}}(L)=\{\mu_1\leq\mu_2\leq\dots\}\subset\R,
$$
where the sequence $\{\mu_k\}$ diverges and each eigenvalue has finite multiplicity (see \cite[Lemma A.5]{FFT}).
For $k\in{\mathbb{N}}$, $k\geq 1$, we denote by $\psi_k$ the $L^{2}\big({\mathbb{S}}^{N-1},{\mathbb{C}}\big)$-normalized
eigenfunction of $L$ corresponding to $\mu_k$, namely
\begin{equation}  \label{angular}
\begin{cases}
L\psi_{k}=\mu_k\,\psi_k(\theta), & \text{in
}{\mathbb{S}}^{N-1}, \\[3pt]
\int_{{\mathbb{S}}^{N-1}}|\psi_k(\theta)|^2\,dS(\theta)=1. &
\end{cases}%
\end{equation}
By repeating each eigenvalue
as many times as its multiplicity, we can arrange the above enumeration in such a way that the correspondence $k\leftrightarrow\psi_k$ is one-to-one. Hence, normalizing, we can construct the set $\{\psi_k\}$ as an orthonormal basis in $L^2(\mathbb S^{N-1};\C)$.

The condition
\begin{equation}  \label{eq:hardycondition}
\mu_1>-\left(\frac{N-2}{2}\right)^{\!\!2}
\end{equation}
implies that
the quadratic form
$$
q[\psi]:=\int_{\R^N}\left|-i\nabla\psi+\frac{\mathbf{A}\left({x}/{|x|}\right)}{|x|}\psi\right|^2
+\int_{\R^N}\frac{a\left({x}/{|x|}\right)}{|x|^2}|\psi|^2,
$$
associated to $H$, is positive (in dimension $N=2$ by definition, while in dimension $N\geq3$ by magnetic Hardy inequality, (see \cite{lw}).
Therefore the hamiltonian $H$ is
realized as the self-adjoint extension (Friedrichs) of $q$ on the
natural form domain, and, by the Spectral Theorem, the hamiltonian flow $e^{-itH}$ associated to equation \eqref{eq:schro} is well defined.

Many efforts have been spent in the last decades to understand the dispersive properties of $e^{-itH}$.
In \cite{FFFP}, Theorem 1.3, we stated a useful representation formula which reads as follows
\begin{equation}\label{representation}
u(x,t)=\frac{e^{\frac{i|x|^{2}}{4t}}}{i(2t)^{{N}/{2}}}\int_{{\mathbb{R}}%
^{N}}K\bigg(\frac{x}{\sqrt{2t}},\frac{y}{\sqrt{2t}}\bigg)e^{i\frac{|y|^{2}}{%
4t}}u_{0}(y)\,dy,
\end{equation}
provided  \eqref{eq:hardycondition} holds. Here we denote
\begin{equation} \label{nucleo}
K(x,y)=\sum\limits_{k=1}^{\infty }i^{-\beta _{k}}j_{-\alpha
_{k}}(|x||y|)\psi _{k}\big(\tfrac{x}{|x|}\big)\overline{\psi _{k}\big(\tfrac{%
y}{|y|}\big)},
\end{equation}
where
\begin{equation} \label{eq:alfabeta}
  \alpha_k:=\frac{N-2}{2}-\sqrt{\bigg(\frac{N-2}{2}\bigg)^{\!\!2}+\mu_k},
  \quad \beta_k:=\sqrt{\left(\frac{N-2}{2}\right)^{\!\!2}+
    \mu_k},
\end{equation}
and, for every $\nu \in {\mathbb{R}}$,
\begin{equation*}
j_{\nu }(r):=r^{-\frac{N-2}{2}}J_{\nu +\frac{N-2}{2}}(r)
\end{equation*}%
with $J_{\nu }$ denoting the Bessel function of the first kind
\begin{equation*}
  J_{\nu }(t)=\bigg(\frac{t}{2}\bigg)^{\!\!\nu }\sum\limits_{k=0}^{\infty }
  \dfrac{(-1)^{k}}{\Gamma (k+1)\Gamma (k+\nu +1)}\bigg(\frac{t}{2}\bigg)
  ^{\!\!2k}.
\end{equation*}
As an immediate consequence of \eqref{representation}, we have the following:
\begin{equation}\label{eq:cor}
\sup_{x,y\in\R^N}\left|K(x,y)\right|<\infty
\qquad
\Rightarrow
\qquad
\left\|e^{-itH}\right\|_{L^1\to L^\infty}\leq C|t|^{-\frac N2},
\end{equation}
for some $C>0$ only depending on $N$,
where $\|e^{-itH}\|_{L^1\to L^\infty}$ denotes the norm of $e^{-itH}$
as an operator from $L^1$ into $L^\infty$. Although proving the uniform boundedness of $K$ can be hard, in \cite{FFFP, f3p-2} we can do it in the following cases:
\begin{itemize}
\item
if $N=2$, for generic $\mathbf{A}, a$ in the above class;
\item
if $N=3$, $\mathbf A\equiv 0$, and $0\leq a$-constant.
\end{itemize}
Starting from the dimension $N=3$, the negative range
\begin{equation}\label{eq:nega}
-\left(\frac{N-2}2\right)^2<\mu_1<0
\end{equation}
makes sense in this setting. Observe that, in the case $a$ constant
and $\A\equiv 0$, we have $\mu_1=a$. In the case $N=3$ and $\A\equiv0$
we have already proved that if $a\geq0$ then the classical $L^1$-$L^\infty$
time decay in \eqref{eq:cor} holds; hence it is natural to
wonder whether the $L^1$-$L^\infty$ time decay estimate still holds or not
 under condition \eqref{eq:nega}. We stress that Strichartz
estimates, which standardly follow by the $L^1$-$L^\infty$ bound, are known to
hold in this case, as proved in \cite{BPSTZ1, BPSTZ,
  PSTZ}. Nevertheless, the best which is known about time-decay for
the class of operators under consideration is in \cite{FFFP, f3p-2},
while for perturbative settings we refer to \cite{RS} as a standard
reference.

The first aim of this paper is to give a negative answer to the above
question, i.e., we want to show that condition \eqref{eq:nega}
immediately destroys the time-decay of the free flow. We can now state
our main result.
\begin{Theorem}\label{thm:schro}
  Let $N\geq3$, $a\in L^{\infty }({\mathbb{S}}^{N-1},{\mathbb{R}})$,
  ${\mathbf{A}}\in C^{1}({\mathbb{S}}^{N-1},{\mathbb{R}}^{N})$, and
  assume \eqref{transversality}, \eqref{eq:hardycondition}, and
  \eqref{eq:nega}. Then, for almost every $t\in\R$,
  $e^{-itH}(L^1)\not\subseteq L^\infty$; in particular
  $e^{-itH}$ is not a
  bounded operator from $L^1$ to $L^\infty$.
\end{Theorem}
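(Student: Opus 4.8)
The plan is to produce, for almost every $t$, an explicit $u_0\in L^1(\R^N)$ with $e^{-itH}u_0\notin L^\infty$, by aligning the datum with the first angular mode so that the representation formula \eqref{representation} collapses to a single, singular Bessel term. Fix a nontrivial $g\in C_c^\infty((0,\infty))$ with $g\ge0$ and set $u_0(y):=g(|y|)\,\psi_1\big(\tfrac{y}{|y|}\big)$. Since $\psi_1\in L^2(\SN)\subset L^1(\SN)$ and $g$ is compactly supported, $u_0\in L^1(\R^N)\cap L^2(\R^N)$ and \eqref{representation} applies. Inserting the kernel \eqref{nucleo}, writing $y=s\phi$ in polar coordinates and using the orthonormality $\int_{\SN}\overline{\psi_k(\phi)}\,\psi_1(\phi)\,dS(\phi)=\delta_{k1}$, all terms with $k\ge2$ integrate to zero and only the first mode survives:
\begin{equation*}
e^{-itH}u_0(x)=\frac{e^{i|x|^2/4t}\,i^{-\beta_1}}{i(2t)^{N/2}}\,\psi_1\big(\tfrac{x}{|x|}\big)\int_0^\infty j_{-\alpha_1}\!\Big(\tfrac{|x|\,s}{2|t|}\Big)e^{is^2/4t}g(s)\,s^{N-1}\,ds .
\end{equation*}

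Next I would read off the behaviour as $x\to0$. By \eqref{eq:alfabeta} the negativity condition \eqref{eq:nega} is exactly $\alpha_1>0$, whereas \eqref{eq:hardycondition} gives $0<\alpha_1<\tfrac{N-2}{2}$; moreover $j_{-\alpha_1}(r)=r^{-(N-2)/2}J_{\beta_1}(r)\sim\big(2^{\beta_1}\Gamma(\beta_1+1)\big)^{-1}r^{-\alpha_1}$ as $r\to0^+$, together with the elementary bound $|r^{-(N-2)/2}J_{\beta_1}(r)|\le C\,r^{-\alpha_1}$ for $0<r\le1$. For $|x|$ small enough the argument $\tfrac{|x|s}{2|t|}$ stays $\le1$ on $\operatorname{supp}g$, so the integrand is dominated by $C\,(\tfrac{|x|}{2|t|})^{-\alpha_1}s^{N-1-\alpha_1}g(s)$, and $s^{N-1-\alpha_1}$ is integrable at the origin because $\alpha_1<N$. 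Dominated convergence then gives, as $x\to0$,
\begin{equation*}
\int_0^\infty j_{-\alpha_1}\!\Big(\tfrac{|x|\,s}{2|t|}\Big)e^{is^2/4t}g(s)\,s^{N-1}\,ds\sim\frac{(|x|/2|t|)^{-\alpha_1}}{2^{\beta_1}\Gamma(\beta_1+1)}\,I(t),\qquad I(t):=\int_0^\infty s^{N-1-\alpha_1}e^{is^2/4t}g(s)\,ds .
\end{equation*}
Since the remaining scalar factors are unimodular or depend only on $|t|$, this yields $|e^{-itH}u_0(x)|\sim C_t\,|x|^{-\alpha_1}\,\big|\psi_1\big(\tfrac{x}{|x|}\big)\big|\,|I(t)|$ near $0$ for a constant $C_t>0$.

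Finally I would conclude. As $\alpha_1>0$ and $\psi_1$ is nontrivial, there is a set $\Sigma\subset\SN$ of positive measure on which $|\psi_1|\ge\delta>0$; along the cone over $\Sigma$ the estimate above forces $|e^{-itH}u_0|\gtrsim|x|^{-\alpha_1}\to\infty$ as $x\to0$, so $e^{-itH}u_0\notin L^\infty(\R^N)$ whenever $I(t)\neq0$. It remains to check $I(t)\neq0$ for a.e. $t$: with $\tau=1/(4t)$ the function $\tau\mapsto\int_0^\infty s^{N-1-\alpha_1}e^{i\tau s^2}g(s)\,ds$ is entire and at $\tau=0$ equals $\int_0^\infty s^{N-1-\alpha_1}g(s)\,ds>0$, hence is not identically zero and vanishes only on a set of measure zero; equivalently $I(t)\neq0$ for a.e. $t\in\R$. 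For each such $t$ we have exhibited $u_0\in L^1$ with $e^{-itH}u_0\notin L^\infty$, which proves $e^{-itH}(L^1)\not\subseteq L^\infty$ and, in particular, that $e^{-itH}$ is not bounded from $L^1$ to $L^\infty$.

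The main obstacle is the dominated-convergence step extracting the leading singularity: one must control the Bessel factor uniformly as $x\to0$, which is exactly what forces the compact support of $g$ (confining the Bessel argument to the regime $r\le1$, where $j_{-\alpha_1}(r)\simeq r^{-\alpha_1}$) and the bound $\alpha_1<N$ from \eqref{eq:hardycondition} (guaranteeing integrability of $s^{N-1-\alpha_1}$ at $s=0$). A secondary but routine point is the legitimacy of the term-by-term integration collapsing \eqref{nucleo} to the single mode, which follows from the convergence of the kernel series on compact sets established in \cite{FFFP} together with the compact support of $g$.
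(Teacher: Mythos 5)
Your proof is correct, but it takes a genuinely different route from the paper's. The paper works with the operator $T=H+\tfrac14|x|^2$: it expands the datum in the explicit eigenbasis $V_{n,j}$ of $T$, uses the pseudoconformal change of variables \eqref{varphi} to reduce the evolution to decoupled ODEs for the Fourier coefficients, and thereby computes $e^{-itH}\widetilde V_{0,1}$ in closed form (Theorem \ref{Counterexample}); the singular factor $|x|^{-\alpha_1}$ with $\alpha_1>0$ is then visible by inspection, for \emph{every} $t$. You instead take a smooth datum concentrated on the first angular mode, collapse the kernel \eqref{nucleo} to its first term by orthogonality, and extract the $|x|^{-\alpha_1}$ blow-up from the small-argument asymptotics of $j_{-\alpha_1}$; the price is the extra analyticity argument needed to guarantee that the oscillatory integral $I(t)$ is nonzero, which is precisely why you only obtain the almost-every-$t$ statement (still exactly what the theorem claims), while the paper's datum works for all $t\neq 0$. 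Both arguments import their heavy machinery from \cite{FFFP} (you the representation formula \eqref{representation}, the paper Proposition \ref{Hilbert}). Your one soft spot, the term-by-term integration collapsing the series, is legitimate: the subspace of functions of the form $f(|y|)\psi_1(y/|y|)$ is invariant under $H$, so only the single-mode Hankel representation is ever needed and no interchange over an infinite series actually occurs. A small bonus of your approach is that the counterexample is a smooth, compactly supported initial datum, rather than one that is already singular at the origin; a small cost is that the mechanism (the solution \emph{develops} the $|x|^{-\alpha_1}$ singularity) is exhibited only asymptotically rather than exactly.
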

\begin{remark}
  Condition \eqref{eq:nega} needs to be read, eventually, in terms of
  the usual Hardy inequality, and its extension to the case of
  magnetic derivatives (see the standard reference \cite{lw}).
  Theorem \ref{thm:schro} shows that condition $\mu_1\geq0$ is
  necessary for the $L^1\to L^\infty$ bound.  In other words, if the
  spherical Hamiltonian is not positive, the usual time decay property
  does not hold.  As we see in the sequel, in the case $\mu_1\geq0$
  the rate of time decay (in suitable topologies) depends on the size
  of $\mu_1$.
\end{remark}
The hint for the proof of Theorem \ref{thm:schro} comes from estimate
(1.29) in \cite{FFFP}, which suggests the failure of the
$L^1$-$L^\infty$ decay in the case of the inverse square
potential. The proof of Theorem \ref{thm:schro} shows that estimate
(1.29) is sharp, and that the phenomenon is general, and related to
the existence of the negative energy-level $\mu_1<0$.  The key role in
the proof is played by the operator
$$
T:=H+\frac14|x|^2,
$$
whose spectral properties are described in Section \ref{sec:schro}
below. Since $T$ has discrete spectrum, and we can decompose
$L^2(\R^N)$ as a direct sum of eigenspaces for $T$, we can expand the
initial datum $u_0$ for \eqref{eq:schro} as a series of the following eigenfunctions
of $T$  forming an orthogonal basis of $L^2(\R^N)$:
\begin{equation}  \label{eigenvectors}
V_{n,j}(x)= |x|^{-\alpha_j}e^{-\frac{|x|^2}{4}}P_{j,n}\Big(\frac{|x|^2}{2}%
\Big) \psi_j\Big(\frac{x}{|x|}\Big),\quad n,j\in\N,\ j\geq1,
\end{equation}
where $P_{j,n}$ is the polynomial of
degree $n$ given by
\begin{equation*}
P_{j,n}(t)=\sum_{i=0}^n \frac{(-n)_i}{\big(\frac{N}2-\alpha_j\big)_i}\,\frac{%
t^i}{i!},
\end{equation*}
denoting as $(s)_i$, for all $s\in{\mathbb{R}}$, the Pochhammer's symbol $%
(s)_i=\prod_{j=0}^{i-1}(s+j)$, $(s)_0=1$.
The main argument in the proof of Theorem \ref{thm:schro} is that the evolution of those
eigenfunctions, as initial data for \eqref{eq:schro}, is quite
explicit.

The second purpose of the present paper is to prove, when the classical
time decay holds, e.g. in the case  $a\geq 0$ constant
and $\A\equiv 0$, an improvement of the decay for higher positive modes.
 Roughly speaking, the more
  positive is $\mu_1>0$, the faster decay is expected to be, in
  suitable topologies (see e.g. \cite{CK, FGK, gk, ko1, ko2, kr,
    krz} for some recent works related to this topic, both for Schr\"odinger
  and heat flows).
For all $k> 1$, let us denote as
\[
\mathcal U_k=\mathop{\rm span}\left\{V_{n,j}:n\in\N,1\leq j<
  k\right\}\subset L^2(\R^N).
\]
In the following theorem we observe that the time decay of the
solution is due to the first nonzero term in the expansion of the
initial datum as a series of eigenfunctions \eqref{eigenvectors}.

\begin{Theorem}\label{thm:2}
Let $N=3$,  $a\geq 0$, and define $H=-\Delta+\frac{a}{|x|^2}$.
\begin{enumerate}[\rm (i)]
\item There exists $C>0$ such that,
for all $f\in L^2(\R^3)$ with $|x|^{-\alpha_1}f\in L^1(\R^3)$,
\begin{equation*}
\left\||x|^{\alpha_1}e^{-itH}f(\cdot)\right\|_{L^\infty}\leq C
t^{-\frac 32+\alpha_1}
\||x|^{-\alpha_1}f\|_{L^{1}}.
\end{equation*}
\item  For all
$k\in\N$, $k\geq1$, there exists $C_k>0$ such that,
for all $f\in \mathcal U_k^\perp$ with $|x|^{-\alpha_k}f\in L^1(\R^3)$,
\begin{equation*}
\left\||x|^{\alpha_k}e^{-itH}f(\cdot)\right\|_{L^\infty}\leq C_k
t^{-\frac 32+\alpha_k}
\||x|^{-\alpha_k}f\|_{L^{1}}.
\end{equation*}
\end{enumerate}
\end{Theorem}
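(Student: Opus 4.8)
The plan is to read off both estimates from the representation formula \eqref{representation} after isolating, through the angular orthogonality, the contribution of the modes $\psi_m$ with $m\ge k$. Writing $X=x/\sqrt{2t}$ and $Y=y/\sqrt{2t}$, and inserting $f(y)=|y|^{\alpha_k}\cdot|y|^{-\alpha_k}f(y)$ into \eqref{representation}, I reduce the whole statement to the single weighted uniform bound
\[
M_k:=\sup_{X,Y\in\R^3}\big(|X|\,|Y|\big)^{\alpha_k}\,\big|K_{\ge k}(X,Y)\big|<\infty,
\]
where $K_{\ge k}$ denotes the tail $\sum_{m\ge k}i^{-\beta_m}j_{-\alpha_m}(|X||Y|)\psi_m(X/|X|)\overline{\psi_m(Y/|Y|)}$ of the kernel \eqref{nucleo}. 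Indeed, since $|x|^{\alpha_k}|y|^{\alpha_k}=(2t)^{\alpha_k}(|X||Y|)^{\alpha_k}$, once $M_k<\infty$ is known one obtains
\[
\big\||x|^{\alpha_k}e^{-itH}f\big\|_{L^\infty}\le (2t)^{\alpha_k-3/2}\,M_k\,\big\||y|^{-\alpha_k}f\big\|_{L^1},
\]
which is exactly the claimed bound, with part (i) being the case $k=1$ (there $\mathcal U_1=\{0\}$ and $\mathcal U_1^\perp=L^2$).

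First I would justify the truncation. Since $N=3$, $\A\equiv0$ and $a$ is constant, the angular operator is $-\Delta_{\SN}+a$, whose eigenfunctions are the spherical harmonics; at angular degree $\ell$ one has $\mu=\ell(\ell+1)+a$, so that $\beta_m,\alpha_m$ depend only on the degree of $\psi_m$ and $\{\psi_m\}$ is an orthonormal basis of $L^2(\SN)$. For each fixed $j$ the family $\{V_{n,j}\}_{n\in\N}$ spans the angular sector $\{g(|x|)\psi_j(x/|x|)\}$, so $\mathcal U_k$ is the sum of the sectors $j=1,\dots,k-1$ and every $f\in\mathcal U_k^\perp$ has an angular expansion involving only $\psi_m$ with $m\ge k$. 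Plugging this expansion into \eqref{representation} and using $\int_{\SN}\overline{\psi_m}\,\psi_{m'}\,dS=\delta_{mm'}$, all terms with $m<k$ drop out, which replaces $K$ by $K_{\ge k}$.

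The core is the bound $M_k<\infty$, which I would establish by splitting according to $s:=|X||Y|$. For $s\ge1$ I use that $(|X||Y|)^{\alpha_k}\le1$ (note $\alpha_k\le0$, since $\mu_k\ge a\ge0$ forces $\beta_k\ge\tfrac12$) together with the uniform boundedness of the full kernel $K$, already established for this class in \cite{FFFP, f3p-2}; because $K_{<k}=K-K_{\ge k}$ is a finite sum of bounded terms (for small $s$, $j_{-\alpha_m}(s)\sim c_m s^{-\alpha_m}$ with $-\alpha_m\ge0$, and for large $s$, $j_{-\alpha_m}(s)=O(s^{-1})$), the tail $K_{\ge k}$ is itself bounded and the region $s\ge1$ is controlled. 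For $s\le1$ I exploit the small-argument bound $|J_\nu(s)|\le (s/2)^\nu/\Gamma(\nu+1)$, valid for $\nu\ge-\tfrac12$, which through $j_{-\alpha_m}(s)=s^{-1/2}J_{\beta_m}(s)$ and $\beta_m-\tfrac12=-\alpha_m$ gives
\[
s^{\alpha_k}\,\big|j_{-\alpha_m}(s)\big|\le \frac{s^{\alpha_k-\alpha_m}}{2^{\beta_m}\Gamma(\beta_m+1)}\le\frac{1}{2^{\beta_m}\Gamma(\beta_m+1)},
\]
the last step because $\alpha_m\le\alpha_k$ for $m\ge k$ and $s\le1$. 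Summing the angular factors via the addition theorem and Cauchy--Schwarz, $\sum_{\deg\psi_m=\ell}|\psi_m(\theta)\overline{\psi_m(\theta')}|\le\frac{2\ell+1}{4\pi}$, and then over $\ell$, the super-exponential growth of $\Gamma(\beta_\ell+1)$ (with $\beta_\ell=\sqrt{(\ell+\tfrac12)^2+a}\sim\ell$) makes the series converge, uniformly in $\theta,\theta'$ and in $s\in(0,1]$. This yields $M_k<\infty$.

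The main obstacle I expect is precisely this uniform-in-$m$ control near $s=0$: one must combine the $k$-dependent ordering $\alpha_m\le\alpha_k$ (which guarantees the weight $s^{\alpha_k}$ is strong enough to absorb the leading $s^{-\alpha_m}$ singularity of \emph{every} surviving mode) with a summable bound on the angular part, and one must justify interchanging the series with the integral in \eqref{representation}. The large-$s$ regime is comparatively soft, resting on the already-known uniform boundedness of $K$; the delicate accounting lies entirely in showing that the tail $\sum_{m\ge k}$ is not merely finite but bounded independently of the angles and of $s\in(0,1]$.
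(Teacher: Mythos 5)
Your proposal is correct and follows essentially the same route as the paper: both pass through the representation formula \eqref{representation} with the kernel truncated to the modes $j\ge k$ (using that $\mathcal U_k^\perp$ consists of functions whose angular expansion starts at $\psi_k$) and reduce the theorem to the weighted bound $|K_k(x,y)|\le C\,(|x||y|)^{-\alpha_k}$, i.e.\ your $M_k<\infty$, which is exactly \eqref{eq:3}. The only difference is that the paper obtains the two ingredients of this bound (the small-$|x||y|$ estimate \eqref{eq:1} and the uniform bound \eqref{eq:2}) by citing \cite{FFFP}, whereas you sketch their proofs directly via the Bessel asymptotics and the addition theorem for spherical harmonics.
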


The rest of the paper is devoted to the proof of Theorems
\ref{thm:schro} and \ref{thm:2}. A final section is devoted to the description of the same phenomenon for the electromagnetic heat flow $e^{-tH}$, which enjoys the same scaling invariance of $e^{-itH}$.

\section{Proof of Theorems \ref{thm:schro} and \ref{thm:2}}\label{sec:schro}
The proof of Theorems \ref{thm:schro} is constructive: assuming \eqref{eq:nega}, we can construct
an explicit initial datum $u_0\in L^1$ such that $e^{-itH}u_0\notin
L^\infty$. The argument strongly relies on the strategy which leads to
the representation formula \eqref{representation}. In order to do
this, we start with some preliminaries, concerning the functional
setting of our problem. The following is analogous to Section 2 in
\cite{FFFP}. We write it here for the sake of completeness (see also
\cite{FFT} for further details).

Define the following Hilbert spaces:
\begin{itemize}
\item the completion ${\mathcal{H}}$ of $C^{\infty}_{\mathrm{c}%
}({\mathbb{R}}^N\setminus\{0\},{\mathbb{C}})$ with respect to the norm
\begin{equation*}
\|\phi\|_{{\mathcal{H}}}=\bigg(\int_{{\mathbb{R}}^N}\bigg(|\nabla\phi(x)|^2+
\Big(|x|^2+\frac1{|x|^2}\Big)|\phi(x)|^2\bigg) \,dx\bigg)^{\!\!1/2};
\end{equation*}

\item the completion $\widetilde{\mathcal H}$ of $C^{\infty}_{\mathrm{c}}({\mathbb{%
R}}^N,{\mathbb{C}})$ with respect to the norm
\begin{equation*}
\|\phi\|_{\widetilde{\mathcal H}}=\bigg(\int_{{\mathbb{R}}^N}\Big(|\nabla\phi(x)|^2+  \big(|x|^2+1%
\big)|\phi(x)|^2\Big) \,dx\bigg)^{\!\!1/2};
\end{equation*}

\item the completion ${\mathcal{H}}_A$ of $%
C^{\infty}_{\mathrm{c}}({\mathbb{R}}^N\setminus\{0\},{\mathbb{C}})$ with
respect to the norm
\begin{equation*}
\|\phi\|_{{\mathcal{H}}_{\mathbf{A}}}=\bigg(\int_{{\mathbb{R}}^N}\Big(%
|\nabla_{\mathbf{A}}\phi(x)|^2+  \big(|x|^2+1\big)|\phi(x)|^2\Big) \,dx\bigg)%
^{\!\!1/2}
\end{equation*}
with $\nabla_{{\mathbf{A}}}\phi= \nabla\phi+i\,\frac {{\mathbf{A}}(x/|x|)}{%
|x|}\phi$.
\end{itemize}
It is clear that $\mathcal{H }\hookrightarrow \widetilde{\mathcal H}$, with continuous embedding.
This, together with \cite[Proposition 6.1]{KW}, gives in addition that
$\mathcal H\hookrightarrow L^p(\R^n)$, with compact embedding,
for all
\begin{equation*}
2\leq p<
\begin{cases}
2^{*}=\frac{2N}{N-2}, & \text{if }N\geq3, \\
+\infty, & \text{if }N=2.%
\end{cases}
\end{equation*}
In analogy with \cite{FFFP}, in the next result,
by a pseudoconformal change of variables (see e.g. \cite{KW}), we reduce the hamiltonian $H$ in equation \eqref{eq:schro} to a new operator, with an harmonic oscillator involved.
\begin{Lemma}
Let \eqref{eq:hardycondition} hold and $u=e^{-itH}u_0$. Then
\begin{equation}  \label{varphi}
\varphi(x,t)= (1+t^2)^{\frac{N}{4}}u\big(\sqrt{1+t^2}x,t\big)e^{-it\frac{%
|x|^2}{4}}
\end{equation}
satisfies
\begin{align*}
&\varphi\in C({\mathbb{R}}; L^{2}({\mathbb{R}}^N)),\quad \varphi (x, 0)=
u(x,0), \\
&\|\varphi(\cdot,t)\|_{L^{2}({\mathbb{R}}^N)}=\|u(\cdot,t)\|_{L^{2}({\mathbb{%
R}}^N)} \text{ for all }t\in{\mathbb{R}},
\end{align*}
and
\begin{equation}  \label{varphieq}
i\dfrac{d\varphi}{d t}(x,t)= \dfrac{1}{(1+t^2)} \bigg(H\varphi(x,t)+\frac{1}{4}|x|^2 \varphi(x,t)\bigg).
\end{equation}
\end{Lemma}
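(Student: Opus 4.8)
The plan is to verify the three assertions in turn, treating the evolution equation \eqref{varphieq} as the computational core. The initial condition is immediate: at $t=0$ we have $\sqrt{1+t^2}=1$, $(1+t^2)^{N/4}=1$ and $e^{-it|x|^2/4}=1$, so $\varphi(\cdot,0)=u(\cdot,0)$ straight from the definition. For continuity I would write $\varphi(\cdot,t)=M_t D_t\,u(\cdot,t)$, where $D_t$ is the $L^2$-unitary dilation $(D_tf)(x)=(1+t^2)^{N/4}f(\sqrt{1+t^2}\,x)$ and $M_t$ is multiplication by the unimodular phase $e^{-it|x|^2/4}$; both $t\mapsto D_t$ and $t\mapsto M_t$ are strongly continuous families of unitaries, and $t\mapsto u(\cdot,t)=e^{-itH}u_0$ is continuous in $L^2$ by the spectral theorem (strong continuity of the group), so the composition lies in $C(\R;L^2(\R^N))$. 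The $L^2$-identity is then transparent: since $|e^{-it|x|^2/4}|=1$ the phase drops out of the norm, and the substitution $y=\sqrt{1+t^2}\,x$, whose Jacobian exactly cancels the factor $(1+t^2)^{N/2}$, gives $\|\varphi(\cdot,t)\|_{L^2}^2=\int_{\R^N}|u(y,t)|^2\,dy=\|u(\cdot,t)\|_{L^2}^2$.

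For the evolution equation I would first record the scaling homogeneity of $H$: since $\A(x/|x|)/|x|$ and $a(x/|x|)/|x|^2$ are homogeneous of degree $-1$ and $-2$, the operator $H$ is homogeneous of degree $-2$, i.e. $H\big(f(s\,\cdot)\big)(x)=s^2(Hf)(sx)$ for $s>0$. Abbreviating $s=s(t)=\sqrt{1+t^2}$, $s'=t/s$, $U(x,t)=u(sx,t)$ and $e=e^{-it|x|^2/4}$, I would differentiate $\varphi=s^{N/2}Ue$ in $t$ via the chain rule and $\p_t u=-iHu$. The homogeneity turns $(\p_tu)(sx,t)=-i(Hu)(sx,t)$ into $-\tfrac{i}{s^2}(HU)$, while $(\nabla u)(sx,t)=\tfrac1s\nabla U$ turns the transport term $s'x\cdot(\nabla u)(sx,t)$ into $\tfrac{t}{s^2}x\cdot\nabla U$; collecting also the derivatives of $s^{N/2}$ and of $e$ I expect
\[
i\p_t\varphi=\frac{1}{s^2}s^{N/2}e\Big(HU+it\,x\cdot\nabla U+\tfrac{itN}{2}U\Big)+\frac{|x|^2}{4}s^{N/2}Ue.
\]

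Separately I would compute $H\varphi=s^{N/2}H(Ue)$ by commuting $H$ through the Gaussian phase. Using $\nabla e=-\tfrac{it}{2}xe$ and expanding $\Delta_{\A}(Ue)$, the transversality condition \eqref{transversality} is exactly what kills the magnetic cross-terms: it yields both $x\cdot\nabla_{\A}U=x\cdot\nabla U$ and $\nabla_{\A}\cdot(xU)=\dive(xU)=NU+x\cdot\nabla U$, leading to
\[
H\varphi=s^{N/2}e\Big(HU+it\,x\cdot\nabla U+\tfrac{t^2|x|^2}{4}U+\tfrac{itN}{2}U\Big).
\]
Adding $\tfrac14|x|^2\varphi=\tfrac{|x|^2}{4}s^{N/2}Ue$ and using $1+t^2=s^2$ to combine $\tfrac{t^2|x|^2}{4}+\tfrac{|x|^2}{4}=\tfrac{s^2|x|^2}{4}$, the harmonic contribution from the phase reassembles so that $\tfrac{1}{s^2}\big(H\varphi+\tfrac14|x|^2\varphi\big)$ matches the expression for $i\p_t\varphi$ above term by term, which is precisely \eqref{varphieq}.

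Finally, a word on rigor and on the main difficulty. The pointwise manipulations above are classical only when $u(\cdot,t)$ is regular enough, so I would first run the computation for $u_0$ in a dense core — for instance finite linear combinations of the eigenfunctions \eqref{eigenvectors}, or $u_0$ in the form domain where $Hu_0$ and $|x|^2u_0$ lie in $L^2$ — where every term makes sense, and then extend \eqref{varphieq} to all $u_0\in L^2$ by density, using the isometry of part two and the continuity of part one. I expect the main obstacle to be the bookkeeping in the second and third steps: correctly tracking the three $t$-dependencies of $\varphi$ (the normalizing power $s^{N/2}$, the dilated argument $sx$, and the phase $e$) and verifying that transversality is exactly the ingredient that cancels the first-order magnetic terms, so that the second-order contribution of the phase produces precisely the harmonic potential $\tfrac14|x|^2$.
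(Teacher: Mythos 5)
Your proposal is correct and follows exactly the route the paper intends: the paper states this lemma without proof, referring to the pseudoconformal change of variables of Kavian--Weissler and to Section~2 of \cite{FFFP}, and your computation is precisely the direct verification underlying that reference. All the key identities check out --- the homogeneity $H(f(s\,\cdot))(x)=s^2(Hf)(sx)$, the cancellation of the magnetic cross-terms via \eqref{transversality}, and the recombination $\tfrac{t^2|x|^2}{4}+\tfrac{|x|^2}{4}=\tfrac{(1+t^2)|x|^2}{4}$ --- and your closing remark about first working on a core (where $H\varphi$ and $|x|^2\varphi$ make classical sense) and then extending by density is the appropriate way to interpret \eqref{varphieq} in the weak sense of the form domain $\mathcal H$.
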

Denote now by
\begin{equation}  \label{operator}
T:{\mathcal{H}}\to {\mathcal{H}}^\star,\quad T=H+\frac{1}{4}|x|^2,
\end{equation}
naturally defined via the associated (positive) quadratic form.
Assumption \eqref{eq:hardycondition}
gives
\begin{equation}  \label{eq:posdef}
\int_{{\mathbb{R}}^N} \bigg[ \big| \nabla_{\mathbf{A}}\phi(x)\big|^2 -\frac{a%
\big({x}/{|x|}\big)}{|x|^2}|\phi(x)|^2+\frac{|x|^2}4|\phi(x)|^2 \bigg]\,dx
\geq C(N,{\mathbf{A}},a)\|\phi\|_{\mathcal{H}}^2,
\end{equation}
for some $C=C(N,\mathbf A,a)>0$ and for all $\phi\in\mathcal{H}$.

The following proposition, which describes completely the spectrum of
$T$, was proved in \cite[Proposition 3.2]{FFFP}.
\begin{Proposition}\label{Hilbert}
Let ${\mathbf{A}}\in C^1({\mathbb{S}}^{N-1},{\mathbb{R}}^N)$
and $a\in  L^{\infty}\big({\mathbb{S}}^{N-1}\big)$, and assume
\eqref{eq:hardycondition}.
Then
$$
\sigma(T)=\sigma_{\textrm{p}}(T)
=
\big\{ \gamma_{m,k}: k,m\in{\mathbb{N}}, k\geq 1\big\}
$$
where
\begin{equation}  \label{eigenvalues}
\gamma_{m,k}=2m-\alpha_k+\dfrac N2, \quad \alpha_k=\frac{N-2}{2}-\sqrt{\bigg(%
\frac{N-2}{2}\bigg)^{\!\!2}+\mu_k},
\end{equation}
$\mu_k$ is the $k$-th eigenvalue of the operator $L$ on $L^2(\mathbb{S}^{N-1})$ and $\alpha_j,\beta_j$ are defined in \eqref{eq:alfabeta}. Each eigenvalue $%
\gamma_{m,k}$ has finite multiplicity equal to
\begin{equation*}
\#\bigg\{j\in{\mathbb{N}},j\geq 1: \frac{\gamma_{m,k}}{2}+\frac{\alpha_j}%
2-\frac N4\in{\mathbb{N}}\bigg\}
\end{equation*}
and a basis of the corresponding eigenspace is
\begin{equation*}
\left\{V_{n,j}: j,n\in{\mathbb{N}},j\geq  1,\gamma_{m,k}=2n-\alpha_j+\frac
N2 \right\},
\end{equation*}
where $V_{n,j}$ is defined in \eqref{eigenvectors}.
\end{Proposition}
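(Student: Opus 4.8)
The plan is to prove Proposition~\ref{Hilbert} by separating variables in the operator $T=H+\tfrac14|x|^2$, exploiting the fact that $H$ is built from the angular Hamiltonian $L$ whose complete spectral data $\{(\mu_k,\psi_k)\}$ is already in hand. Writing $x=r\theta$ with $r=|x|$, I would look for eigenfunctions of the form $\phi(x)=g(r)\psi_k(\theta)$. Because $\mathbf A$ is transversal and homogeneous of degree $-1$ and $a$ is homogeneous of degree $-2$, the action of $H$ on such a product splits: the full magnetic Laplacian plus inverse-square potential acts on $\psi_k$ through $L$ with eigenvalue $\mu_k$, leaving a purely radial ODE for $g$. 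Concretely, I expect
\begin{equation*}
T\big(g(r)\psi_k(\theta)\big)=\Big(-g''-\tfrac{N-1}{r}g'+\tfrac{\mu_k}{r^2}g+\tfrac{r^2}{4}g\Big)\psi_k(\theta),
\end{equation*}
so that the eigenvalue problem $T\phi=\gamma\phi$ reduces to a one-dimensional radial Schr\"odinger equation with an inverse-square term and a harmonic-oscillator confinement.

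The next step is to solve this radial eigenvalue problem explicitly. Motivated by the definition of $\alpha_k$ in \eqref{eq:alfabeta}, I would substitute $g(r)=r^{-\alpha_k}e^{-r^2/4}w(r^2/2)$, which strips off the singular behaviour at the origin (governed by the indicial root $\alpha_k$ associated with $\mu_k$) and the Gaussian decay at infinity dictated by the harmonic oscillator. After this change of variables and setting $t=r^2/2$, the equation for $w$ becomes a confluent hypergeometric (Kummer/Laguerre-type) equation. Requiring $w$ to be a polynomial — the condition for $g\psi_k$ to lie in the form domain $\mathcal H$, equivalently for square-integrability against the confining potential — quantizes the eigenvalue: the polynomial solutions are exactly the $P_{j,n}$ appearing in \eqref{eigenvectors}, and the corresponding eigenvalues are $\gamma_{n,k}=2n-\alpha_k+\tfrac N2$, matching \eqref{eigenvalues}. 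This simultaneously produces the explicit eigenfunctions $V_{n,j}$ and confirms they form an orthogonal system.

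To conclude that $\sigma(T)=\sigma_{\mathrm p}(T)=\{\gamma_{m,k}\}$ with the stated multiplicities, I would first invoke the positivity \eqref{eq:posdef} together with the compact embedding $\mathcal H\hookrightarrow L^2(\R^N)$ to deduce that $T^{-1}$ is compact, so $T$ has purely discrete spectrum with no continuous or residual part. Then I would argue that the family $\{V_{n,j}:n\in\N,\,j\geq1\}$ is complete in $L^2(\R^N)$: for each fixed $j$ the radial functions $\{g_{n,j}(r)\}_n$ form a basis of the relevant weighted radial $L^2$ space (a standard Laguerre-completeness fact), and summing over the orthonormal angular basis $\{\psi_j\}$ gives a basis of the full space by Fubini on $\R^N=(0,\infty)\times\SN$. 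Completeness rules out any eigenvalues beyond those already exhibited, and the multiplicity formula follows by collecting all pairs $(n,j)$ with $2n-\alpha_j+\tfrac N2$ equal to a given value $\gamma_{m,k}$, which is precisely the counting set in the statement.

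The main obstacle I anticipate is the completeness/self-adjointness bookkeeping rather than the explicit computation: one must verify that the formally constructed $V_{n,j}$ genuinely span the operator domain (so that no spectrum is missed) and that the polynomial-termination condition is equivalent to membership in $\mathcal H$ — in particular checking integrability of $|x|^{-\alpha_j}e^{-|x|^2/4}P_{j,n}$ near the origin under the constraint \eqref{eq:hardycondition}, which guarantees $\alpha_j<N/2$ and hence local square-integrability. Since this proposition is quoted from \cite[Proposition 3.2]{FFFP}, I would lean on the Laguerre-polynomial completeness theorem and the compactness already established to close this gap cleanly.
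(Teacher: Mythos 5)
Your outline is sound and follows essentially the same route as the cited source: the paper itself does not prove this proposition but quotes it from \cite[Proposition 3.2]{FFFP}, where the argument is exactly the one you describe — discreteness of $\sigma(T)$ from coercivity \eqref{eq:posdef} plus the compact embedding of $\mathcal H$, separation of variables via the angular eigenbasis $\{\psi_k\}$ (using transversality so that $\mathbf A$ acts only through $L$), reduction of the radial problem to a Kummer equation whose polynomial solutions are the generalized Laguerre polynomials $P_{j,n}$ with quantized eigenvalues $2n-\alpha_j+\tfrac N2$, and Laguerre completeness to exclude further spectrum. The only caveat is that yours is a plan rather than a worked proof, but every step you flag (the indicial root $-\alpha_k$, the condition $\alpha_j<N/2$ ensured by \eqref{eq:hardycondition}, and the multiplicity count over pairs $(n,j)$ with equal $\gamma$) is correct.
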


\begin{remark}
\label{rem:ortho}  It is easy to check that
\begin{equation*}
\text{if }(m_1,k_1)\neq(m_2,k_2)\quad\text{then}\quad V_{m_1,k_1}\text{ and }
V_{m_2,k_2}\text{ are orthogonal in }{\  L^{2}({\mathbb{R}}^N)}.
\end{equation*}
By Proposition \ref{Hilbert} and classical spectral theory, it follows that
\begin{equation*}
\left\{ \widetilde V_{n,j}= \frac{V_{n,j}}{\|V_{n,j}\|_{L^{2}({\mathbb{R}}%
^N)}}: j,n\in{\mathbb{N}},j\geq  1\right\}
\end{equation*}
is an orthonormal basis of $L^{2}({\mathbb{R}}^N)$.
\end{remark}
Notice, in addition, that $\widetilde{V}_{n,j}\in L^1(\R^N)$, since $\alpha_j<N$, for any $j$, by \eqref{eq:alfabeta}. In order to prove our main result, it is fundamental to study the solution of \eqref{eq:schro}, with initial datum $\widetilde V_{n,j}$.
\begin{Theorem}\label{Counterexample}
  Let $a\in L^{\infty }({\mathbb{S}}^{N-1},{\mathbb{R}})$,
  ${\mathbf{A}}\in C^{1}({\mathbb{S}}^{N-1},{\mathbb{R}}^{N})$, and
  assume \eqref{transversality} and  \eqref{eq:hardycondition}. Then, for
  any $n,j\in\mathbb N$, $j\geq1$,
  \begin{multline}\label{eq:jmode}
    e^{-itH}\widetilde V_{n,j}(x)\\=
    (1+t^2)^{-\frac{N}{4}+\frac{\alpha_j}{2}}\,|x|^{-\alpha_j}
    \frac{e^{\frac{-|x|^2}{4(1+t^2)}}}{\|V_{n,j}\|_{L^{2}(\mathbb{R}^{N})}}
    e^{i\frac{|x|^2 t}{4(1+t^2)}} e^{-i\gamma_{n,j}\arctan t}
    \psi_j\big(\tfrac{x}{|x|}\big)
    P_{n,j}\big(\tfrac{|x|^2}{2(1+t^2)}\big).
\end{multline}
\end{Theorem}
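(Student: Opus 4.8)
The plan is to exploit the pseudoconformal change of variables encoded in the Lemma, which turns the flow $e^{-itH}$ into a time-dependent but spatially \emph{diagonal} evolution governed by $T$. Setting $u_0=\widetilde V_{n,j}$ and $u=e^{-itH}u_0$, the associated function $\varphi$ from \eqref{varphi} solves the Cauchy problem \eqref{varphieq} with initial datum $\varphi(x,0)=\widetilde V_{n,j}(x)$. The whole point is that $T$ acts only in the spatial variable, so an eigenfunction of $T$ stays an eigenfunction for all times, and the PDE collapses to a scalar ODE in $t$.

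Concretely, by Proposition \ref{Hilbert} the normalized eigenfunction $\widetilde V_{n,j}$ satisfies $T\widetilde V_{n,j}=\gamma_{n,j}\widetilde V_{n,j}$ with $\gamma_{n,j}=2n-\alpha_j+\frac N2$. I would look for a solution of separated form $\varphi(x,t)=c(t)\,\widetilde V_{n,j}(x)$. Substituting into \eqref{varphieq} and using the eigenvalue relation, the spatial profile factors out and one is left with $i\,c'(t)=\frac{\gamma_{n,j}}{1+t^2}\,c(t)$, $c(0)=1$. Integrating, and recalling $\int_0^t(1+s^2)^{-1}\,ds=\arctan t$, yields $c(t)=e^{-i\gamma_{n,j}\arctan t}$, hence $\varphi(x,t)=e^{-i\gamma_{n,j}\arctan t}\,\widetilde V_{n,j}(x)$.

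The remaining step is to invert the transformation \eqref{varphi}. Solving for $u$ at the point $\sqrt{1+t^2}\,x$ and relabelling, one gets
\[
u(x,t)=(1+t^2)^{-N/4}\,e^{-i\gamma_{n,j}\arctan t}\,\widetilde V_{n,j}\!\left(\tfrac{x}{\sqrt{1+t^2}}\right)e^{\frac{i|x|^2t}{4(1+t^2)}}.
\]
Plugging in the explicit form \eqref{eigenvectors} of $V_{n,j}$ and using homogeneity, namely $|x/\sqrt{1+t^2}|^{-\alpha_j}=(1+t^2)^{\alpha_j/2}|x|^{-\alpha_j}$ together with the corresponding rescaling in the Gaussian and in the polynomial argument, and the scale invariance of the angular factor $\psi_j(x/|x|)$, collects all powers of $1+t^2$ into the prefactor $(1+t^2)^{-N/4+\alpha_j/2}$ and reproduces exactly \eqref{eq:jmode}.

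The one point requiring care is the justification that the separated ansatz actually produces $e^{-itH}\widetilde V_{n,j}$, and not merely \emph{some} solution of \eqref{varphieq}. This follows from the well-posedness of \eqref{eq:schro}: under \eqref{eq:hardycondition} the flow $e^{-itH}$ is well defined by the Spectral Theorem and the Cauchy problem has a unique solution, so the $\varphi$ built from the genuine flow via \eqref{varphi} must coincide with the explicit separated solution constructed above. Once this uniqueness is invoked, everything else is the routine bookkeeping of the powers of $1+t^2$ in the inversion.
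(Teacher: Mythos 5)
Your proposal is correct and follows essentially the same route as the paper: pass to $\varphi$ via the pseudoconformal change of variables \eqref{varphi}, use that $\widetilde V_{n,j}$ is an eigenfunction of $T$ to reduce \eqref{varphieq} to the scalar ODE $i c'(t)=\frac{\gamma_{n,j}}{1+t^2}c(t)$, integrate to get the phase $e^{-i\gamma_{n,j}\arctan t}$, and invert the transformation. The only cosmetic difference is that the paper projects the true $\varphi$ onto the orthonormal basis $\{\widetilde V_{m,k}\}$ and solves the resulting ODEs for the Fourier coefficients, whereas you posit a separated ansatz and then invoke uniqueness of the Cauchy problem — a point you correctly flag and justify.
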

\begin{proof}
Let $u_0=\widetilde V_{n,j}$. We notice that, since $2\alpha_j<N$,
$u_0\in L^2$, hence we can expand it in Fourier series with respect to
the orthonormal system $\big\{ \widetilde V_{m,k}:
  m,k\in{\mathbb{N}},k\geq  1\big\}$, i.e.
\begin{equation}\label{datoinicial}
u_{0}=\sum\limits_{\substack{ m,k\in {\mathbb{N}} \\ k\geq 1}}c_{m,k}
\widetilde{V}_{m,k}
\quad\text{in }L^2
\end{equation}
with
\[
c_{m,k}=\int_{{\mathbb{R}}^{N}}u_{0}(x)\overline{\widetilde{V}_{m,k}(x)}
\,dx
=
\begin{cases}
1,
&
\text{if }(m,k)=(n,j),
\\
0,
&
\text{otherwise}.
\end{cases}
\]
In a similar way, for $t>0$, we can expand the function $\varphi (\cdot ,t)$ defined
in \eqref{varphi} (with $u=e^{-itH}u_0$) as
\begin{equation}
\varphi (\cdot ,t)=\sum\limits_{\substack{ m,k\in {\mathbb{N}} \\ k\geq 1}}%
\varphi _{m,k}(t)\widetilde{V}_{m,k}\quad \text{in }L^{2}({\mathbb{R}}^{N}),
\label{eq:exp_varphi}
\end{equation}%
where
\begin{equation*}
\varphi _{m,k}(t)=\int_{{\mathbb{R}}^{N}}\varphi (x,t)\overline{\widetilde{V}%
_{m,k}(x)}\,dx.
\end{equation*}%
Since $\varphi (z,t)$ satisfies \eqref{varphieq}, we obtain that $\varphi
_{m,k}\in C^{1}({\mathbb{R}},{\mathbb{C}})$ and
\begin{equation*}
i\varphi _{m,k}^{\prime }(t)=\dfrac{\gamma _{m,k}}{1+t^{2}}\varphi
_{m,k}(t),\quad \varphi _{m,k}(0)=c_{m,k},
\end{equation*}%
which by integration yields $\varphi _{m,k}(t)=c_{m,k}e^{-i\gamma
_{m,k}\arctan t}$. Hence expansion (\ref{eq:exp_varphi}) can be rewritten as
\begin{equation*}
\varphi (z,t)=\sum\limits_{\substack{ m,k\in {\mathbb{N}} \\ k\geq 1}}%
c_{m,k}e^{-i\gamma _{m,k}\arctan t}\widetilde{V}_{m,k}(z)
=
e^{-i\gamma_{n,j}\arctan t}\widetilde{V}_{n,j}(z),
\end{equation*}%
for all $t>0$.
It follows by \eqref{varphi} that
\begin{align*}
&e^{-itH}\widetilde V_{n,j}(x)
= e^{i\frac{t|x|^2}{4(1+t^2)}} (1+t^2)^{-\frac{N}{4}} \varphi \left(  \dfrac{x}{\sqrt{1+t^2}}, t\right)
\\
&
=    (1+t^2)^{-\frac{N}{4}+\frac{\alpha_j}{2}}\,|x|^{-\alpha_j}
    \frac{e^{\frac{-|x|^2}{4(1+t^2)}}}{\|V_{n,j}\|_{L^{2}(\mathbb{R}^{N})}}
    e^{i\frac{|x|^2 t}{4(1+t^2)}} e^{-i\gamma_{n,j}\arctan t}
    \psi_j\big(\tfrac{x}{|x|}\big)
    P_{n,j}\big(\tfrac{|x|^2}{2(1+t^2)}\big),
\end{align*}
and the proof is now complete.
\end{proof}
\begin{proof}[Proof of Theorem \ref{thm:schro}]
  The proof of Theorem \ref{thm:schro} is now a straightforward
  consequence of the previous result. It is sufficient to notice that
  condition \eqref{eq:nega} implies that $\alpha_1>0$, by
  \eqref{eq:alfabeta}. Therefore, taking as initial datum
  $u_0=\widetilde V_{0,1}$ for \eqref{eq:schro} and observing that
  $P_{1,0}\equiv 1$, formula
  \eqref{eq:jmode} gives that $e^{-itH}u_0\notin L^\infty$.
\end{proof}
\begin{remark}
  Formula \eqref{eq:jmode} also gives a quite precise description of the time-decay phenomenon of each Fourier mode. Denote by $\pi_{n,j}$ the projector of $L^2(\R^N)$ on the eigenspace generated by $V_{n,j}$, namely
  $$
  \pi_{n,j}f:=c_{n,j}\widetilde V_{n,j},
  \qquad
  c_{n,j}:=\int_{\R^N}f(x)\overline{\widetilde V_{n,j}(x)}\,dx,
  $$
  and assume that $\mu_j\geq0$, which implies by \eqref{eq:alfabeta} that $\alpha_j\leq0$.
  Identity \eqref{eq:jmode} hence easily gives
  \begin{equation}\label{eq:dia}
  \left\||x|^{-\gamma_j}e^{-itH}\pi_{n,j}u_0\right\|_{L^\infty}
  \leq
  C|t|^{-\frac N2-\gamma_j}\left\||x|^{\gamma_j}\pi_{n,j}u_0\right\|_{L^1},
  \qquad
  \gamma_j:=-\alpha_j,
  \end{equation}
  for some $C>0$ independent on $u_0$ (but depending on $j$ and $n$).
  In particular, in the weighted topologies $L^1(|x|^{\gamma_j}\,dx),\
  L^\infty(|x|^{-\gamma_j}\,dx)$, the decay of the evolution $j$-th
  mode of the initial datum, with respect to $V_{n,j}$, is faster than
  the usual one by a polynomial factor $\alpha_j$. This perfectly
  matches the diamagnetic phenomenon already observed in \cite{FGK,
    gk}. Notice that, in the free case $\mathbf A\equiv a\equiv0$, we
  have $\mu_1=0$ and the leading term for the time decay is given by
  the 0-th mode, which gives the usual decay rate $|t|^{-\frac
    N2}$. Nevertheless, proving a time-decay estimate for $e^{-itH}$
  is both a matter of zero modes, which can be rephrased by the
  condition $\mu_1\geq0$, and of high modes, which is concerned with
  the uniform convergence of a series with generic term as in
  \eqref{eq:dia}.
\end{remark}

 In the case $N=3$, $\mathbf A\equiv 0$ and $a\geq0$  in
which the series in \eqref{nucleo} is bounded and the classical time
decay consequently holds, the improved time decay observed in \eqref{eq:dia} for initial data of
type \eqref{eigenvectors} can be extended to initial data which are
orthogonal to the space generated by the first modes, as stated in
Theorem \ref{thm:2}.

\begin{proof}[Proof of Theorem \ref{thm:2}]
If the initial datum $u_0$ in \eqref{eq:schro} belongs to $\mathcal
U_k^\perp$ (in the case $k=1$ if  $u_0\in L^2(\R^3)$), it can be expanded in Fourier series  as
\begin{equation*}
u_0=\sum\limits_{\substack{ m,j\in {\mathbb{N}} \\ j\geq k}}c_{m,j}
\widetilde{V}_{m,k}\quad \text{in }L^{2}({\mathbb{R}}^{3}),\quad \text{where
}c_{m,j}=\int_{{\mathbb{R}}^{3}}u_0(x)\overline{\widetilde{V}_{m,j}(x)}
\,dx.
\end{equation*}
Then all the series expansions appearing in the proof of \cite[Theorem
1.3]{FFFP} start from the index $k$ instead of $1$. Therefore, if the
initial datum $u_0$ belongs to $\mathcal U_k^\perp$,
 the
representation formula \eqref{representation} can be refined as
\begin{equation}\label{representation_refined}
u(x,t)=\frac{e^{\frac{i|x|^{2}}{4t}}}{i(2t)^{{3}/{2}}}\int_{{\mathbb{R}}%
^{3}}K_k\bigg(\frac{x}{\sqrt{2t}},\frac{y}{\sqrt{2t}}\bigg)e^{i\frac{|y|^{2}}{%
4t}}u_{0}(y)\,dy,
\end{equation}
where
\begin{equation*} \label{nucleo_k}
K_k(x,y)=\sum\limits_{j\geq k}i^{-\beta _{k}}j_{-\alpha
_{k}}(|x||y|)\psi _{k}\big(\tfrac{x}{|x|}\big)\overline{\psi _{k}\big(\tfrac{%
y}{|y|}\big)}.
\end{equation*}
From \cite[Proof of Theorem 1.1]{FFFP} it follows that 
\begin{equation}\label{eq:1}
|K_k(x,y)|\leq C (|x||y|)^{-\alpha_k}\quad
\text{if $|x||y|\leq
\delta$,}
\end{equation}
for some constant $C>0$ depending on $\delta$ and $k$ but independent
of $x,y$; furthermore from \cite[estimate (6.16)]{FFFP} it follows
that
\begin{equation}\label{eq:2}
\sup_{x,y\in \R^3}|K_k(x,y)|<+\infty.
\end{equation}
Combining \eqref{eq:1} and \eqref{eq:2} and recalling that
$\alpha_k<0$, we conclude that
\begin{equation}\label{eq:3}
|K_k(x,y)|\leq {\rm const\,}(|x||y|)^{-\alpha_k}
\end{equation}
for all $x,y\in \R^2$, for some constant ${\rm const\,}>0$ independent
of $x,y$. The conclusion follows from estimate \eqref{eq:3} and the
representation formula \eqref{representation_refined}.
\end{proof}

\appendix

\section{Heat semigroup}\label{sec:heat}
We conclude this note with a small appendix devoted to the heat
semigroup $e^{-tH}$. Since it enjoys the same scaling invariance of
$e^{-itH}$, it is natural to expect analogous phenomena occurring
 at the level of $L^p-L^{p'}$ time-decay. Moreover, notice that
by the Barry Simon's diamagnetic inequality
$$
\left|e^{-tH}f\right|\leq \left|e^{t\Delta}f\right|,
\qquad
\text{provided }a(\theta)\geq0,
$$
for all $t>0$
(see \cite{S}). Hence we easily obtain that
$$
\left\|e^{-tH}u_0\right\|_{L^\infty}\leq C|t|^{-\frac 32}\left\|u_0\right\|_{L^1},
$$
for all $t>0$, provided $a(\theta)\geq0$, as a consequence of the same
estimate for $e^{-t\Delta}$.  In the general case (i.e. if $a$ has any
sign), the phenomenon of lack of the classical $L^1-L^\infty$ bound
was completely described in \cite{IIY}, where sharp decay estimates of
$L^q$-norms for nonnegative Schr\"odinger heat semigroups were
established.
In the spirit of Theorem \ref{Counterexample}, we construct below an explicit
example  confirming the lack of the classical $L^1-L^\infty$ bound
proved in \cite{IIY}.
Here
the situation is quite simpler, due to the following self-similarity
issue.  Let us consider the equation
\begin{equation}\label{eq:homogeneous}
v_t-\Delta v+a\dfrac{\,v}{|x|^{2}}=0,
\quad
a>-\bigg(\frac{N-2}{2}\bigg)^{\!\!2}.
\end{equation}
We look for solutions of the form
$$
v(x,t)= t^{-\mu}\phi\Big(\frac{r}{t^{\nu}}\Big)\psi_k(\theta), \quad
\text{where }x=r\theta,\ r\geq0,\ \theta\in {\mathbb S}^{N-1},
$$
and $\psi_k$ is as in \eqref{angular} with
$\A\equiv 0$ for some $k\geq1$.
By the change of variables $s=\frac{r}{t^{\nu}}$ and $\nu=\frac{1}{2}$, we obtain
\begin{multline}\label{eq:phi}
0=v_t-v''-\Big(\dfrac{N-1}{r}\Big)v'+\dfrac{a}{r^2}v-\frac1{r^2}\Delta_{{\mathbb
    S}^{N-1}}v\\\equiv
-t^{-\mu-1}\psi_k(\theta)\bigg(\phi''(s)+\Big(\dfrac{N-1}{s}+\dfrac{s}{2}\Big)\phi'(s)+\Big(\mu-\dfrac{\mu_k}{s^2}\Big)\phi(s)\bigg),
\end{multline}
where $\mu_k$ is the $k$-th eigenvalue of problem \eqref{angular}.
Denoting by $\phi(s)=s^{-\alpha}e^{-\beta s^{\gamma}}$, we now get
\begin{equation}\label{eq:der}
\left\{\begin{array}{l}
\phi'(s)=\Big(-\dfrac{\alpha}{s}-\gamma\beta s^{\gamma-1}\Big)\phi(s),\\[10pt]
\phi''(s)=\bigg(\dfrac{\alpha}{s^2}-\beta\gamma(\gamma-1)s^{\gamma-2}+\Big(\dfrac{\alpha}{s}+\beta\gamma
s^{\gamma-1}\Big)^{\!2}\bigg)\phi(s).
\end{array}\right.
\end{equation}
By \eqref{eq:phi} and \eqref{eq:der}, with the choices
$\alpha=\alpha_{k}$ with $\alpha_k$ as in \eqref{eq:alfabeta},
$\beta=\frac{1}{4}$, $\gamma=2$ and $\mu=\frac{N-\alpha_{k}}{2}$, it
follows that
$$
v_t-v''-\Big(\dfrac{N-1}{r}\Big)v'+\dfrac{a}{r^2}v-\frac1{r^2}\Delta_{{\mathbb
    S}^{N-1}}v
=0.
$$
In conclusion,
$$
v(x,t)=t^{-\frac{N}{2}+\alpha_k}|x|^{-\alpha_{k}}e^{{-\frac{|x|^2}{4t}}}\psi_k\Big(\frac{x}{|x|}\Big)
$$
is a solution to \eqref{eq:homogeneous}, with initial datum
$v(x,t_0)\in L^1(\R^N)$ (with $t_0>0$). Again we notice that condition
\eqref{eq:nega}, which now reads $a<0$, implies that, taking $k=1$, $v(\cdot,t)\notin
L^\infty(\R^N)$, for any $t>0$, thus  confirming the lack of the classical $L^1-L^\infty$ bound.

Furthermore,
 an improvement of the decay for higher positive modes holds also in
 this case; indeed,
 the above example shows that, for any $M>0$ there exists an initial
 datum $u_0$ (it is enough to take above $k$ large enough) for which the solution
 decays faster that $|t|^{-M}$.

\end{document}